\theoremstyle{plain}
\newtheorem{thm}{Theorem}[section]
\newtheorem{conj}[thm]{Conjecture}
\noindent \emph{Proof.} {}{#1}{}}{\hfill
\theoremstyle{plain} 
\newcommand{\thistheoremname}{}
\newtheorem{genericthm}[section]{\thistheoremname}
\theoremstyle{definition}
\newcounter{counter}
\newcommand{\less}{\setminus}
 \def\es{\emptyset}
 \def\dfn#1{{\sl #1}}
 \newcommand{\ceil}[1]{{\left\lceil #1 \right\rceil}}
 \title{Dominating Hadwiger's Conjecture holds for all $2K_2$-free  graphs}
 \author{Zi-Xia Song\thanks{Supported by  NSF grant    DMS-2153945. E-mail address: {\tt  Zixia.Song@ucf.edu}}\, \,   and Thomas Tibbetts\thanks{Supported by  NSF grant    DMS-2153945 supplemental funding  for  undergraduate students  at the University of Central Florida. E-mail address: {\tt  Thomas.Tibbetts@ucf.edu}}}
  \affil{  \small {Department  of Mathematics, University of Central Florida, Orlando, FL 32816, USA}}
\date{}
\begin{document}

\maketitle

\begin{abstract}
 A   dominating $K_t$ minor in a graph $G$ is a sequence $(T_1,\dots,T_t)$ of pairwise disjoint non-empty connected subgraphs of $G$, such that for $1 \leq i<j\leq t$, every vertex in $T_j$ has a neighbor in $T_i$. Replacing ``every vertex in $T_j$'' by ``some vertex in $T_j$'' retrieves the standard definition of a $K_t$ minor.  The strengthened notion was introduced  by   Illingworth and Wood [arXiv:2405.14299], who   asked whether    every   graph with chromatic number $t$  contains a dominating $K_t$ minor.   This   is   a  substantial strengthening of  the celebrated  Hadwiger’s Conjecture,  which asserts  that every   graph with chromatic number $t$  contains a   $K_t$ minor.  At the ``New Perspectives in Colouring and Structure'' workshop  held at   the Banff International Research Station   from September 29 - October 4, 2024,   Norin  referred to this  question as the ``Dominating Hadwiger's Conjecture'' and     believes it is likely false. 
 In this paper we    prove that  the Dominating Hadwiger's Conjecture holds for  all   $2K_2$-free  graphs.  A key component of our proof is the clever use of the existence of an induced banner,  obtained by adding a  vertex adjacent to exactly one vertex on  a cycle of length four.
\end{abstract}

\section{Introduction}
All graphs in this paper are finite and simple. For a graph $G$, we use   $\alpha(G)$ and $\chi(G)$  to denote the     independence number and chromatic number  of $G$, respectively.   A \dfn{clique} in a graph $G$ is a set   of pairwise adjacent vertices. For a vertex $v \in V(G)$, we use $N(v)$ to denote the set of vertices in $G$ which are all adjacent to $v$.   If $X,Y \subseteq V(G)$ are disjoint, we say that $X$ is \dfn{complete} to $Y$ if  every vertex in $X$ is adjacent to all vertices in $Y$, and $X$ is \dfn{anticomplete} to $Y$ if no vertex in $X$ is adjacent to any vertex in $Y$. If $X = \{x\}$, we simply say $x$ is complete to $Y$ or $x$ is anticomplete to $Y$. 
 The subgraph of $G$ induced by $X$ is  denoted as $G[X]$. We denote by $X \less Y$ the set $X - Y$  and $G \less X$ the subgraph of $G$ induced on $V(G) \less X$.     If $X = \{ x\}$, we simply write $X \setminus x$ and $ G \setminus x$, respectively. Throughout, we use the notation ``$A :=$'' to indicate that $A$ is defined to be the expression on the right-hand side. For any positive integer $k$, we write $[k]:=\{1, \dots , k\}$. \medskip
 
 Given a graph $H$, we say that a graph $G$ is \dfn{$H$-free} if $G$ has no induced subgraph isomorphic to $H$.   A graph  $G$ contains a  \dfn{$K_t$ minor} if  there exist pairwise disjoint non-empty connected subgraphs $T_1, \ldots, T_t$ of $G$, such that for $1 \leq i<j\leq t$, some vertex in $T_j$ has a neighbor in $T_i$.  Our work is motivated by the  celebrated Hadwiger's Conjecture~\cite{Had43}.

\begin{conj}[Hadwiger's Conjecture~\cite{Had43}]\label{c:HC} For every integer $t \geq 1$, every graph with no $K_t$ minor is $(t-1)$-colorable. 
\end{conj}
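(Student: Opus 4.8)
The plan is to prove Conjecture~\ref{c:HC} by induction on $t$. The cases $t \le 3$ are elementary, $t = 4$ is the classical theorem of Hadwiger and Dirac, $t = 5$ is equivalent to the Four Color Theorem via Wagner's reduction, and $t = 6$ is the theorem of Robertson, Seymour and Thomas; so fix $t \ge 7$ and suppose, for contradiction, that there is a graph with no $K_t$ minor and chromatic number at least $t$. Let $G$ be such a graph with $|V(G)|$ minimum. Then $G$ is vertex-$t$-critical, so $\delta(G) \ge t - 1$ and $G$ has no clique cutset (color the two sides of any clique cutset separately and glue them along the clique); moreover, by the connectivity reductions of Kawarabayashi, one may assume $G$ is $ct$-connected for an absolute constant $c > 0$. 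Finally, by the extremal bounds of Kostochka and of Thomason, $G$ has between $(t-1)\,|V(G)|/2$ and $O\!\left(t\sqrt{\log t}\right)|V(G)|$ edges.

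The heart of the argument is to produce a $K_t$ minor in such a $G$. I would work with a \emph{partial clique minor}: a maximum-size family $(T_1,\dots,T_s)$ of pairwise disjoint, pairwise adjacent connected subgraphs, chosen so that $W := V(G) \setminus \bigcup_{i} V(T_i)$ is as small as possible. If $s \ge t$ we are done, so suppose $s \le t-1$. The aim is to show that either (i) $G[W]$ together with the attachment structure of $W$ to the branch sets admits a short augmenting path in a suitable auxiliary graph --- whose existence is forced by the $ct$-connectivity of $G$ --- allowing one to enlarge or split a branch set, and so to increase $s$ or decrease $|W|$, contradicting the choice of $(T_1,\dots,T_s)$; or (ii) no such augmentation exists, in which case the rigidity of the attachment structure converts into a proper $(t-1)$-coloring of $G$ (each branch set gets one color, and the vertices of $W$ are colored using exactly the constraints that blocked augmentation), contradicting $\chi(G) \ge t$. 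Carrying out (ii) is where one must inject an input finer than average degree: for instance, a lemma that in a $K_t$-minor-free graph some vertex has a neighborhood of bounded chromatic number, or a local-density bound, so that the coloring of $W$ can genuinely be pushed through once the global minor structure is frozen.

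The main obstacle --- and I will be candid about it --- is the $\sqrt{\log t}$ discrepancy between the extremal function for $K_t$ minors and the linear bound Conjecture~\ref{c:HC} demands: Kostochka's random-graph construction yields $K_t$-minor-free graphs of average degree $\Omega\!\left(t\sqrt{\log t}\right)$, so no proof can rest on density alone, and the induction must genuinely use that $G$ is assumed merely to be hard to color, not merely dense. Concretely, the case I expect to be hardest is when $G$ is ``pseudorandom-like'': locally sparse, globally dense, highly connected, with no clique cutset --- precisely the regime in which the augmentation in (i) has the least to work with and (ii) must do almost all the work. Showing that in a minimal counterexample this regime nonetheless forces either a small separator or a direct $(t-1)$-coloring is the crux, and it is the part I expect to require a genuinely new idea; the surrounding reductions are, by now, standard.
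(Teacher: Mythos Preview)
The statement you are trying to prove is Hadwiger's Conjecture itself, which the paper does \emph{not} prove: it is stated there as Conjecture~\ref{c:HC}, and it remains open for all $t \ge 7$. The paper's actual contribution is Theorem~\ref{t:main}, which concerns the \emph{Dominating} Hadwiger's Conjecture restricted to $2K_2$-free graphs. So there is no ``paper's own proof'' of this statement to compare against.

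As for your proposal on its own merits: it is not a proof, and you say so yourself. The reductions you outline (vertex-criticality, no clique cutset, high connectivity, Kostochka--Thomason edge bounds) are all correct and well known, but the core step --- your dichotomy (i)/(ii) --- is where the entire difficulty of Hadwiger's Conjecture lives. In particular, your case (ii) asserts that when no augmentation of a maximum partial clique minor exists, the ``rigidity of the attachment structure converts into a proper $(t-1)$-coloring of $G$.'' This is exactly the unproven heart of the problem: nobody knows how to extract a $(t-1)$-coloring from the failure of such augmentations, and the Kostochka random-graph examples you cite show that no purely density-based argument can bridge the gap. Your closing paragraph acknowledges this explicitly (``the part I expect to require a genuinely new idea''). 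A proof proposal that identifies the crux and then defers it is an outline of the difficulty, not a proof.
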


Hadwiger's Conjecture is widely considered among the most important problems in graph theory and has motivated numerous developments in graph coloring and graph minor theory.  We refer the reader to   recent surveys  \cite{Toft, Seymoursurvey, Norinsurvey} for further  background on Hadwiger's Conjecture. 
Hadwiger~\cite{Had43} and Dirac~\cite{Dirac52} independently showed that \cref{c:HC}  holds for $t \leq 4$. However, for $t\ge5$, Hadwiger's Conjecture implies the Four Color Theorem~\cite{AH77,AHK77}.   Wagner~\cite{Wagner37} proved that the case $t=5$ of Hadwiger's Conjecture is, in fact, equivalent to the Four Color Theorem, and the same was shown for $t=6$ by Robertson, Seymour and  Thomas~\cite{RST93}. Despite receiving considerable attention over the years, Hadwiger's Conjecture remains  open for $t\ge 7$.  In the 1980s, Kostochka~\cite{Kostochka82,Kostochka84}  and Thomason~\cite{Thomason84} proved that every   graph with chromatic number $t$   contains a clique minor of order  $ \Omega(t/\sqrt{\log t})$,  which was improved to $ \Omega(t/(\log t)^{1/4 + \epsilon})$ by Norin, Postle, and the first author~\cite{NPS23}, and very recently, to $ \Omega(t/\log\log t)$ by Delcourt and Postle \cite{DP25}.   \medskip

  Recently, Illingworth and Wood~\cite{IW24}  introduced the notion of a  ``dominating $K_t$  minor'': a graph $G$ contains a \dfn{dominating $K_t$ minor} if there exists a  sequence $(T_1,\dots,T_t)$ of pairwise disjoint non-empty connected subgraphs of $G$, such that for $1 \leq i<j\leq t$,  every vertex in $T_j$ has a neighbor in $T_i$. Replacing ``every vertex in $T_j$'' by ``some vertex in $T_j$'' recovers the standard definition of a $K_t$ minor  of $G$. 
 Illingworth and Wood explored in what sense dominating $K_t$ minors  behave like (non-dominating) $K_t$ minors: they observed   that the two notions are equivalent for $t\leq 3$, but are already very different for $t=4$, for instance, the $1$-subdivision of $K_n$ contains a $K_n$ minor for all $n \geq 4$, yet lacks a dominating $K_4$ minor. This led them to propose a natural strengthening of Hadwiger’s Conjecture:  \emph{is every graph  with no dominating $K_t$ minor  $(t-1)$-colorable?}  Norin~\cite{Norin24} referred to this as the ``Dominating Hadwiger's Conjecture'' and     suspects the conjecture is false.

\begin{conj}[Dominating Hadwiger's Conjecture~\cite{IW24}]\label{c:dHC}
    For every integer $t \geq 1$, every graph with no dominating $K_t$ minor is $(t-1)$-colorable.
\end{conj}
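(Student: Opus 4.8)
\medskip

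\noindent\textbf{Proof strategy.} The plan is to prove \cref{c:dHC} by induction on $t$. Since a graph containing an induced subgraph with a dominating $K_t$ minor has one itself (the same branch sets stay connected, disjoint, and correctly dominating), it suffices to find a dominating $K_t$ minor inside a $t$-vertex-critical subgraph; so we may assume $G$ is $t$-vertex-critical, hence $\delta(G)\ge t-1$ and $G$ has no clique cutset. The cases $t\le 3$ hold because there the dominating and ordinary notions coincide and \cref{c:HC} is classical; so we assume $t\ge 4$ and that \cref{c:dHC} holds for $t-1$. One point worth noting is that, unlike $K_t$-minor-freeness, the property of having no dominating $K_t$ minor is monotone (closed under deleting vertices and edges) but \emph{not} minor-closed: the $1$-subdivision of $K_4$ has no dominating $K_4$ minor, yet contracting it yields $K_4$, which trivially does. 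Hence the minor structure theory that settles Hadwiger's Conjecture for $t\le 6$ is not directly available.

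For the inductive step I would try to peel off the first branch set: find a connected induced subgraph $T_1$ of $G$ such that $T_1$ dominates $V(G)\setminus V(T_1)$ and $\chi\big(G-V(T_1)\big)\ge t-1$. Given such a $T_1$, applying the induction hypothesis to $G-V(T_1)$ produces a dominating $K_{t-1}$ minor $(T_2,\dots,T_t)$; since $T_1$ dominates all of $V(G)\setminus V(T_1)$, it dominates $V(T_2)\cup\cdots\cup V(T_t)$, so $(T_1,\dots,T_t)$ is the desired dominating $K_t$ minor. A natural starting point for producing $T_1$: take a proper $t$-coloring in which one colour class $V_t$ is a maximal independent set. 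Then $V_t$ is dominating, and because $V_t$ is independent, any proper colouring of $G-V_t$ extends to $G$, forcing $\chi(G-V_t)=t-1$ exactly. Thus $V_t$ already has both required properties, except that it need not be connected.

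The main obstacle is precisely that last point: connecting $V_t$ (or some other dominating set) into a single connected $T_1$ without destroying the bound $\chi(G-V(T_1))\ge t-1$. Since $G[V_t]$ is edgeless, a Steiner tree joining its components may use $\Omega(|V_t|)$ extra vertices, and moving each such vertex out of $G-V_t$ can cost a colour, in general pushing $\chi(G-V(T_1))$ far below $t-1$. Overcoming this would require either choosing the colouring so that $V_t$ is simultaneously dominating and admits a cheap Steiner tree running through the high-chromatic part of $G$, or peeling off several branch sets at once. It is exactly here that a counterexample is likely to live: a graph with $\chi(G)=t$ whose chromatic number is ``global'' — a Kneser graph, a Mycielskian, a shift graph, or a pseudorandom graph — where deleting any connected dominating set collapses the chromatic number well below $t-1$. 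This matches Norin's expectation \cite{Norin24} that \cref{c:dHC} is in fact false. Consequently the realistic program splits in two: hunt for such a counterexample, and, in parallel, establish \cref{c:dHC} for classes in which chromatic number is controlled by local structure. The present paper carries out the second half for $2K_2$-free graphs via the existence of an induced banner; natural further targets include perfect graphs, line graphs, and $H$-free graphs for other small $H$.
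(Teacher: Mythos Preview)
The statement you were asked to prove is \cref{c:dHC}, which in the paper is a \emph{conjecture}, not a theorem: the paper offers no proof of it, and indeed records Norin's suspicion that it is false. There is therefore no ``paper's own proof'' to compare against. What the paper actually proves is the special case \cref{t:main} for $2K_2$-free graphs.

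Your write-up is also not a proof. You correctly set up the natural inductive scheme and isolate the right first move (a maximal independent colour class $V_t$ is dominating and its removal keeps $\chi\ge t-1$), but you then identify --- accurately --- that connecting $V_t$ into a single branch set without destroying the chromatic lower bound is the entire difficulty, and you do not resolve it. From that point on the text becomes a research programme (look for counterexamples among Kneser/Mycielski/shift graphs; prove special cases) rather than an argument. That is a reasonable assessment of the landscape, and it aligns with what the paper says in the introduction, but it leaves the conjecture exactly where it started: open. If your intention was to prove \cref{c:dHC}, the gap is that the ``connect $V_t$ cheaply'' step has no content; if your intention was instead to discuss strategy, then you should not label it a proof.
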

 
Conjecture~\ref{c:dHC} is  trivially true for $t\le3$.  Illingworth and Wood~\cite{IW24} verified the case $t=4$ by  showing  that every graph with no dominating $K_4$ minor  is $2$-degenerate and $3$-colorable. Norin~\cite{Norin24} announced a proof that every graph with no dominating $K_5$ minor  is $5$-degenerate.   More generally, Illingworth and Wood~\cite{IW24}  proved that 
every graph with no dominating $K_t$ minor  is $2^{t-2}$-colorable. 
  They further  provided two pieces of evidence for the Dominating Hadwiger's  
Conjecture: (1) It is true for almost every graph, (2) Every graph $G$ with no dominating $K_t$ minor has a $(t-1)$-colorable induced subgraph on at least half the vertices, which implies there is an independent set of size at least $\ceil{|V(G)|/(2t-2)}$.  \medskip

The \dfn{dominating Hadwiger number} $h_d(G)$ of a graph $G$ is the largest integer $t$ such that $G$ contains a dominating $K_t$ minor.  In our  search for   potential counterexamples, we prove that the Dominating Hadwiger's Conjecture holds for all $2K_2$-free graphs, where $2K_2$ denotes the complement of the cycle of length four.

\begin{restatable}{thm}{dominating}\label{t:main}
Every $2K_2$-free graph $G$ satisfies  $h_d(G) \geq \chi(G)$.
 \end{restatable}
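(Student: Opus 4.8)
The plan is to induct on $|V(G)|$; the base case $|V(G)|\le 1$ is immediate. For the inductive step, two standard reductions apply. First, if $G$ is disconnected then, being $2K_2$-free, at most one component has an edge; choosing a component $H$ with $\chi(H)=\chi(G)$ (a proper induced subgraph of $G$) and noting that any dominating $K_m$ minor of $H$ is one of $G$, we may apply the inductive hypothesis to $H$. So assume $G$ is connected. Likewise, if $\chi(G-v)=\chi(G)$ for some $v$ we apply induction to $G-v$; so assume $G$ is vertex-critical, whence $\delta(G)\ge\chi(G)-1$. The two tools are: (i) \emph{prepending}: for every non-empty connected $T_1\subseteq V(G)$ the set $W:=N(T_1)\less T_1$ is a proper subset of $V(G)$ and $h_d(G)\ge 1+h_d(G[W])$, since $(T_1,S_1,\dots,S_m)$ is a dominating $K_{m+1}$ minor of $G$ whenever $(S_1,\dots,S_m)$ is a dominating $K_m$ minor of $G[W]$; combined with the inductive hypothesis this gives $h_d(G)\ge 1+\chi(G[W])$; and (ii) \emph{$2K_2$-freeness}: if $S$ is a connected vertex set containing an edge and $A(S)$ is the set of vertices outside $S$ with no neighbour in $S$, then $A(S)$ is independent, since two adjacent vertices of $A(S)$ together with an edge of $G[S]$ would form an induced $2K_2$.

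By tool (i) it suffices to find a connected $T_1$ with $\chi\bigl(G[N(T_1)\less T_1]\bigr)\ge\chi(G)-1$. Three situations are easy. If $\chi(G)=\omega(G)$, taking the vertices of a maximum clique as singleton branch sets gives a dominating $K_{\chi(G)}$ minor. If some vertex $v$ has an independent set of non-neighbours, take $T_1=\{v\}$: then $V(G)\less N(v)$ is independent, so one fresh colour handles it and $\chi(G[N(v)])\ge\chi(G)-1$. And if $\chi(G)=\omega(G)+1$, let $K$ be a maximum clique and put $T_1:=V(G)\less K$: since $\delta(G)\ge\omega(G)=|K|$, every vertex of $K$ has a neighbour outside $K$; by $2K_2$-freeness $G[V(G)\less K]$ has at most one component with an edge; and a single-vertex component of $G[V(G)\less K]$ would be complete to $K$, contradicting $\omega(G)=|K|$. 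Hence $G[V(G)\less K]$ is connected and dominates $K$, so $(V(G)\less K,u_1,\dots,u_{\omega(G)})$, where $K=\{u_1,\dots,u_{\omega(G)}\}$, is a dominating $K_{\chi(G)}$ minor.

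There remains the case $\chi(G)\ge\omega(G)+2$ with every vertex having two adjacent non-neighbours, and this is where the banner appears and where the real difficulty lies. Fix a vertex $v$ and an edge $xy$ between two non-neighbours of $v$. A neighbour of $v$ adjacent to neither $x$ nor $y$ would, with the edge $xy$, give an induced $2K_2$, so $N(v)\subseteq N(x)\cup N(y)$; vertex-criticality rules out $N(v)\subseteq N(x)$ and $N(v)\subseteq N(y)$ (either would let $v$ reuse the colour of $x$ or of $y$), so there are $z_1\in N(v)\cap N(y)\less N(x)$ and $z_2\in N(v)\cap N(x)\less N(y)$, distinct since $z_1=z_2$ would make $\{v,z_1,x,y\}$ an induced $2K_2$. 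Then $\{v,z_1,y,x,z_2\}$ induces a $C_5$ when $z_1\not\sim z_2$, and $\{z_1,z_2,x,y\}$ induces a $C_4$ when $z_1\sim z_2$; iterating this local analysis and repeatedly invoking $2K_2$-freeness, one extracts an \emph{induced banner}. The crux -- the step I expect to be the main obstacle -- is to convert an induced banner into a connected $T_1$ with $\chi\bigl(G[N(T_1)\less T_1]\bigr)\ge\chi(G)-1$, so that tool (i) and the inductive hypothesis close the argument; the pendant vertex of the banner, adjacent to exactly one vertex of the $C_4$, is precisely the asymmetry that should make such a choice of $T_1$ available even though in this case no single vertex has an independent set of non-neighbours.
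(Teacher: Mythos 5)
Your preliminary reductions are fine: the connectivity and criticality reductions, the ``prepending'' observation that $h_d(G)\ge 1+h_d(G[N(T_1)\setminus T_1])$ for connected $T_1$, the independence of the set of vertices missing a connected edge-containing set, and the three easy cases ($\chi=\omega$; some vertex whose non-neighbourhood is independent; $\chi=\omega+1$ via $T_1=V(G)\setminus K$) are all correct. But the proof stops exactly where the theorem actually lives. In the remaining case you only exhibit an induced $C_4$ or $C_5$, then assert that ``iterating this local analysis'' yields an induced banner, and finally state as an expectation --- not an argument --- that a banner can be converted into a connected $T_1$ with $\chi\bigl(G[N(T_1)\setminus T_1]\bigr)\ge\chi(G)-1$. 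Neither step is established, and the second is not even the right target: there is no evident reason why a single prepended branch set should suffice, and the paper's proof in fact never produces one. Instead it uses the banner only conditionally (Claim 1: \emph{if} an induced banner exists, it extends to the graph $T$, proved by a minimality argument that simultaneously deletes a $2$-colourable set and prepends \emph{two} branch sets $D_1,D_2$ that dominate everything left), combines this with the forbidden-holes theorem to force an induced $C_5$, and then carries out a long structural analysis around that $C_5$ (every outside vertex is anticomplete to it or sees at least four of its vertices; the non-neighbour classes $Y_1,\dots,Y_5$ are cliques of a common size $m$ with $(m-1)$-regular bipartite graphs between consecutive ones; the totally joined set $J$ and the anticomplete set $I$ behave rigidly). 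The contradiction is reached by prepending $2m+2$ carefully chosen branch sets $D_1,\dots,D_{2m+2}$ covering $\{v_1,\dots,v_4\}\cup Y_1\cup\dots\cup Y_4$, after discarding a $(2m+2)$-colourable part, so that every surviving vertex meets every $D_i$ --- an unboundedly long ``prepend'' rather than the single-set version your tool (i) provides.

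So the gap is concrete: you have no proof that an induced banner exists in the hard case, no proof of any structural consequence of a banner, and no construction replacing the paper's Claims 1--8, which constitute essentially all of the work. As written, the proposal proves the theorem only when $\chi(G)\le\omega(G)+1$ or some vertex has an independent non-neighbourhood, and leaves the general $2K_2$-free case open.
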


We prove Theorem~\ref{t:main} in Section~\ref{s:main}. Our novel proof  cleverly   relies on the existence of an induced \dfn{banner},  obtained by adding a  vertex adjacent to exactly one vertex on  a cycle of length four. It also uses  Theorem~\ref{t:forbidhole}  that follows directly from the proof of  Theorem 2.2 of  B. Thomas and the first author~\cite{ST17}. For brevity, we omit its proof here.

\begin{thm}\label{t:forbidhole} Every   $\{C_4, C_5, C_6, \ldots, C_{2\alpha(G)}\}$-free graph $G$ satisfies   $h_d(G) \geq \chi(G)$.
\end{thm}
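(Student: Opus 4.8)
\noindent\emph{Proof plan.} The plan is to argue by induction on $|V(G)|$: take a counterexample $G$ with $|V(G)|$ minimum, write $\alpha:=\alpha(G)$ and $t:=\chi(G)$, and derive a contradiction. First I would clear away the easy reductions. Since $h_d$ is monotone under induced subgraphs and $\chi$ is attained on one component, $G$ is connected. Three reductions each contradict minimality: (i) if some $v$ has $\chi(G[N(v)])\ge t-1$, then $G[N(v)]$ is a smaller graph of the class (an induced subgraph with $\alpha(G[N(v)])\le\alpha$), so by induction it has a dominating $K_{t-1}$ minor $(T_1,\dots,T_{t-1})$, and $(\{v\},T_1,\dots,T_{t-1})$ is a dominating $K_t$ minor of $G$ since each $T_i\subseteq N(v)$; (ii) if $G$ has a clique cutset $K$ with $G=G_1\cup G_2$, $G_1\cap G_2=G[K]$, then $\chi(G)=\max(\chi(G_1),\chi(G_2))$, so some smaller $G_i$ of the class has $\chi(G_i)=t$ and, by induction, a dominating $K_t$ minor, which is one of $G$; (iii) if $G$ has a simplicial vertex $v$, then either $\chi(G-v)=t$ (apply induction to $G-v$) or $\{v\}\cup N(v)$ is a clique of size $\ge t$. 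Hence $G$ is $2$-connected, has no clique cutset, has no simplicial vertex, and $\chi(G[N(v)])\le t-2$ for every $v$. Since a chordal graph has a simplicial vertex, $G$ is not chordal; also $\alpha\ge 2$ and $t\ge 3$ (if $\alpha=1$ then $G=K_t$, and if $t\le 2$ then $G$ is not a counterexample).

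Because $G$ is non-chordal it has an induced cycle of length $\ge 4$; since $\alpha(C_k)=\lfloor k/2\rfloor\le\alpha$ forces $k\le 2\alpha+1$ and $G$ is $\{C_4,\dots,C_{2\alpha}\}$-free, every such cycle has length exactly $2\alpha+1$. Fix an induced cycle $C=v_1v_2\cdots v_{2\alpha+1}v_1$. I would next prove the key structural fact that every vertex $u\in V(G)\setminus V(C)$ has $N(u)\cap V(C)$ equal to three consecutive vertices of $C$ or to all of $V(C)$. If $N(u)$ omits two vertices on a single arc of $C$, then $\{u\}$ together with a maximum independent set of $C$ disjoint from $N(u)$ (to which $u$ is anticomplete) is independent of size $>\alpha$, which is impossible; this kills the cases $|N(u)\cap V(C)|\in\{1,2\}$. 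If $x,y$ are consecutive among the $C$-neighbours of $u$ with an arc of length $d$ between them, the cycle through $u$ and that arc is induced of length $d+2$, so---being neither a forbidden $C_j$, $4\le j\le 2\alpha$, nor a cycle of length $\ge 2\alpha+2$, which would have independence number $>\alpha$---one gets $d\in\{1,2\alpha-1\}$; since there are $\ge 3$ such neighbours and the arc lengths between cyclically consecutive ones are positive summing to $2\alpha+1$, this forces either three consecutive vertices or all of $V(C)$. Finally $N(u)\cap V(C)\ne\emptyset$: otherwise a shortest path $p_0\cdots p_\ell$ from $u$ to $V(C)$ has $\ell\ge 2$ with $p_{\ell-1}$ having a $C$-neighbour and $p_{\ell-2}$ having none; if $p_{\ell-1}$ is not complete to $C$, replacing $C$ by the induced $(2\alpha+1)$-cycle through $p_{\ell-1}$ and the length-$(2\alpha-1)$ arc determined by its three $C$-neighbours leaves $p_{\ell-2}$ with exactly one neighbour on it, contradicting the first part; and if $p_{\ell-1}$ is complete to $C$, then $p_{\ell-2}$ (anticomplete to $C$) extends a maximum independent set of $C$, again impossible.

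Granting this, the endgame is to locate $t-3$ connected branch sets $S_1,\dots,S_{t-3}$ inside $G-V(C)$, each complete to $V(C)$, with every vertex of $S_j$ having a neighbour in $S_i$ for $i<j$---in the cleanest case a clique of size $t-3$ among the vertices complete to $C$. Then $\bigl(V(C)\setminus\{v_1,v_{2\alpha+1}\},\ \{v_{2\alpha+1}\},\ \{v_1\},\ S_1,\dots,S_{t-3}\bigr)$ is a dominating $K_t$ minor: its first entry induces the path $v_2v_3\cdots v_{2\alpha}$; $v_{2\alpha+1}$ has the neighbour $v_{2\alpha}$ there; $v_1$ has the neighbours $v_2$ and $v_{2\alpha+1}$ in the second and third entries; and every vertex of each $S_j$ is adjacent to all of $V(C)$ (the union of the first three entries) and, for $i<j$, to a vertex of $S_i$. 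This contradicts $h_d(G)<t$ and finishes the proof. To find $S_1,\dots,S_{t-3}$ I would analyse the components of $G-V(C)$ using the rigid attachment structure above together with $2$-connectivity, the absence of clique cutsets and of simplicial vertices, and the bound $\chi(G[N(v)])\le t-2$---tracking, for each component, which ``centre'' $v_j$ its three-consecutive attachments use and which of its vertices are complete to $C$---and show that the vertices complete to $C$, suitably augmented by mutually compatible representatives of the three-consecutive attachments (each of which can be made complete to $V(C)$ after rerouting within its component), contain the required branch sets; otherwise the colouring obstruction localises to produce a clique cutset, a simplicial vertex, or a vertex with a neighbourhood of chromatic number $\ge t-1$, all already excluded.

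I expect this last structural step---extracting $S_1,\dots,S_{t-3}$ from the rigid odd-hole attachment structure together with the above exclusions---to be the main obstacle; it is the technical core of the argument, and it is precisely the content carried out inside the proof of Theorem~2.2 of \cite{ST17}. Everything else (the reductions, the attachment analysis, and the verification that the displayed sequence is a \emph{dominating} $K_t$ minor, not merely an ordinary one) is comparatively routine, and it is this last verification---that the minor built in \cite{ST17} is already dominating---that gives the theorem as stated.
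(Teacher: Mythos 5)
The paper gives no proof of this theorem: it is stated with the remark that it follows directly from the proof of Theorem~2.2 of \cite{ST17}, and the proof is omitted for brevity. Your outline reproduces what is presumably the right skeleton --- minimal counterexample, the standard reductions, the observation that every induced cycle of length at least $4$ must have length exactly $2\alpha(G)+1$, and the attachment analysis showing that a vertex off such a cycle $C$ is anticomplete to $C$, complete to $C$, or adjacent to exactly three consecutive vertices of $C$ --- and those parts are correct as far as they go.

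However, the decisive step is missing, and you say so yourself: the extraction of the branch sets $S_1,\dots,S_{t-3}$, which you call ``the technical core of the argument,'' is deferred to \cite{ST17} and replaced by a one-sentence hope that the colouring obstruction ``localises'' to a clique cutset, a simplicial vertex, or a vertex whose neighbourhood has chromatic number at least $t-1$. That sentence is not an argument, and the step it replaces is exactly where the theorem lives. Concretely: your endgame needs $t-3$ connected branch sets, each complete to $V(C)$ and forming a dominating chain among themselves, but your own attachment analysis permits vertices adjacent to only three consecutive vertices of $C$, and nothing you write shows that the set of vertices complete to $C$ has chromatic number (or clique number) at least $t-3$, nor that the three-consecutive-attachment vertices can be rerouted to become complete to $C$ or discarded into a remainder of small chromatic number. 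The obvious alternative --- contract $C$ into three connected parts and recurse on the rest --- is precisely what breaks for the \emph{dominating} version, since a vertex whose only neighbours on $C$ are three consecutive ones need not meet all three parts, so the recursion would not produce a dominating minor. Settling this is the actual content of the proof being cited; without it your proposal is a plausible plan, not a proof.
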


Theorem~\ref{t:main} was used by Scully and the first author~\cite{SS25} to provider further evidence for  the Dominating Hadwiger's Conjecture   in the case of  graphs $G$ with $\alpha(G)\le 2$. A significantly stronger result than Theorem~\ref{t:alpha=2} was obtained in~\cite{SS25}.

\begin{thm}[Scully and Song~\cite{SS25}]\label{t:alpha=2}
 Let $G$ be a graph on $n$ vertices with $\alpha(G)\le 2$. Let $H\ne K_2\cup K_3$ be a graph with $|V(H)|\le5$ and $\alpha(H)\le2$. If $G$ is $H$-free, then $h_d(G) \geq \chi(G)$.
 \end{thm}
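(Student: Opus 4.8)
The plan is to reduce to the two sufficient conditions already in hand and then carry out a structural case analysis over a short list of forbidden graphs. If $\alpha(G)=1$ then $G$ is complete and $h_d(G)=\chi(G)=|V(G)|$, so we may assume $\alpha(G)=2$. If $G$ is $2K_2$-free we are done by \cref{t:main}, and if $G$ is $C_4$-free we are done by \cref{t:forbidhole} (for $\alpha(G)=2$ the set $\{C_4,\dots,C_{2\alpha(G)}\}$ is just $\{C_4\}$); so we may assume $G$ contains both an induced $2K_2$ and an induced $C_4$. Next, we reduce the forbidden graph: if $H$ is an induced subgraph of $H'$, then every $H$-free graph is $H'$-free, so it suffices to treat the maximal members, under the induced-subgraph order, of $\{H:|V(H)|\le 5,\ \alpha(H)\le 2,\ H\ne K_2\cup K_3\}$. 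Adjoining a dominating vertex to a member on at most four vertices produces a five-vertex member (this preserves $\alpha\le 2$ and connectedness, hence never yields the disconnected graph $K_2\cup K_3$), so the maximal members are exactly the $13$ graphs on five vertices whose complements are the triangle-free five-vertex graphs other than $K_{2,3}=\overline{K_2\cup K_3}$. It remains to prove the theorem for each of these $13$ graphs $H$.

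For the core I would pass to the complement. Since $\alpha(G)\le 2$, the graph $\overline G$ is triangle-free, and since $G$ has no induced $H$, $\overline G$ has no induced $\overline H$ (a triangle-free five-vertex graph). Now $\chi(G)$ equals the clique-cover number of $\overline G$, and for a triangle-free graph $F$ this number is $|V(F)|-\nu(F)$, where $\nu$ denotes the maximum matching number (cover the matched vertices by the matching and the rest by singletons). Fix a maximum matching $M=\{u_1v_1,\dots,u_\nu v_\nu\}$ of $\overline G$ and let $W$ be the set of vertices it misses; by maximality $W$ is a clique of $G$, and $|W|+\nu=\chi(G)$. Thus the target becomes: build a dominating $K_{|W|+\nu}$ minor of $G$. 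The natural template takes the vertices of $W$ as singleton branch sets and one further branch set per matching edge; since each pair $u_iv_i$ is a non-edge of $G$, each of these $\nu$ branch sets must be made connected in $G$ by spending part of the slack of $\nu$ unused vertices — routing through a common $G$-neighbor of $u_i$ and $v_i$, or absorbing $u_i,v_i$ into a suitable clique of $G$ — all the while preserving the domination condition that every vertex of a later branch set has a neighbor in every earlier one.

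The case analysis then has two flavors. For $H=K_5$, i.e. $\overline H=5K_1$, the Ramsey bound $R(3,5)=14$ forces $|V(G)|\le 13$, and a bounded analysis, guided by the fixed induced $C_4$, induced $2K_2$, and $\alpha(G)=2$, produces the minor. For each of the remaining $12$ graphs $H$, the triangle-free $\overline H$-free graphs $\overline G$ form a tightly restricted class — for instance graphs of bounded maximum degree, induced subgraphs of complete bipartite graphs, blow-ups of $C_5$, and similar — and there one reads off $M$, chooses the $\nu$ extra branch sets explicitly, and checks domination. The recurring verification tool is the induced banner: any vertex of $G$ adjacent to exactly one vertex of an induced $C_4$ of $G$ spans an induced banner, so whenever $H$ is, or contains, the banner this adjacency pattern is outlawed, pinning down enough edges to supply the common neighbors the construction needs.

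The hard part, I expect, is exactly this constructive step in the unbounded (structured) cases: selecting the $\nu$ extra branch sets so that they are connected in $G$, pairwise disjoint from one another and from $W$, and collectively consume at most the $\nu$ available spare vertices, while \emph{simultaneously} satisfying the dominating-minor requirement — each later branch set must be dominated by every earlier one, not merely touched. It is precisely here that $H$-freeness, usually in the guise of a forbidden induced banner, must be leveraged to force the adjacencies that make such branch sets available. A secondary difficulty is organizing the $13$ cases so that these arguments stay uniform rather than splintering into a long list of ad hoc computations.
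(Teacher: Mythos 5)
First, note that the paper does not prove Theorem~\ref{t:alpha=2} at all: it is quoted from Scully and Song~\cite{SS25}, so there is no in-paper argument to compare your attempt against. Judged on its own terms, your proposal is a plan rather than a proof. The preliminary reductions are correct and cleanly executed: the reduction to $\alpha(G)=2$, the appeal to Theorem~\ref{t:main} and Theorem~\ref{t:forbidhole} to dispose of the $2K_2$-free and $C_4$-free cases (indeed $\{C_4,\dots,C_{2\alpha(G)}\}=\{C_4\}$ here), the reduction to the $13$ five-vertex graphs $H$ with $\alpha(H)\le 2$ and $H\ne K_2\cup K_3$ (adding a dominating vertex preserves $\alpha\le 2$ and connectivity, and $\overline{K_2\cup K_3}=K_{2,3}$ is the one excluded complement among the $14$ triangle-free five-vertex graphs), and the identity $\chi(G)=n-\nu(\overline{G})$ together with the observation that the set $W$ of unmatched vertices is a clique of $G$.

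Everything after that is deferred, and that is where the entire content of the theorem lives. For each of the $13$ graphs $H$ you must actually produce the $\nu$ branch sets that make each non-adjacent pair $u_iv_i$ connected in $G$ while satisfying the \emph{ordered} domination condition, and you give no construction for even one case. In particular you never fix the order of the branch sets, which is not a cosmetic issue: a singleton $\{w\}$ with $w\in W$ placed early must be adjacent to \emph{every} vertex of every later branch set, a far stronger demand than merely having a neighbor there, and nothing in the setup guarantees it (with $\alpha(G)=2$ the non-neighborhood of $w$ is a clique, which can still be large). Nor do you show that the $\nu$ spare vertices suffice to connect all $\nu$ pairs disjointly, or exhibit how $H$-freeness (via the banner or otherwise) forces the common neighbors your template needs. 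As written, the proposal identifies where a proof would have to go but does not supply one; it should be treated as incomplete.
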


It is worth noting that Micu~\cite{Micu05}   proved that Hadwiger's Conjecture holds for all $2K_2$-free graphs. For completeness, we recall the proof here, as it is remarkably short and relies on the Strong Perfect Graph Theorem~\cite{SPGT}.   However, Micu's argument does not extend to establishing a dominating $K_{\chi(G)}$ minor.

\begin{thm}[Micu~\cite{Micu05}]\label{t:mainHC}
 Every $2K_2$-free graph $G$  contains a $K_{\chi(G)}$ minor.
\end{thm}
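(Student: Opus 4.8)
The plan is to reduce to perfect graphs via the Strong Perfect Graph Theorem and to promote the resulting obstruction to a clique minor, using $2K_2$-freeness of the \emph{whole} graph as the engine.

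First, if $G$ is perfect then $\chi(G)=\omega(G)$, so $G$ contains $K_{\chi(G)}$ as a subgraph, hence as a minor, and we are done. So we may assume $G$ is not perfect. By the Strong Perfect Graph Theorem, $G$ then contains an induced odd hole $C_{2k+1}$ with $k\ge 2$ or an induced odd antihole $\overline{C_{2k+1}}$ with $k\ge 2$. Since $G$ is $2K_2$-free and every $C_{2\ell+1}$ with $\ell\ge 3$ already contains an induced $2K_2$ (take two edges far apart on the cycle), the only odd hole that can occur is $C_5$. Thus $G$ contains an induced $C_5$ or an induced odd antihole $\overline{C_{2k+1}}$ with $k\ge 3$.

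The substantive step is to extract a $K_{\chi(G)}$ minor from such an obstruction, and here the key leverage is that $2K_2$-freeness forces strong domination: for any edge $uv$ of $G$, the set $V(G)\setminus(N[u]\cup N[v])$ is an independent set anticomplete to $\{u,v\}$ — an edge inside it would, together with $uv$, induce a $2K_2$. Using this, the plan is to induct on $\chi(G)$ (equivalently on $|V(G)|$): pick an edge inside the obstruction, peel off the associated independent "remainder", apply induction to the rest to get a $K_{\chi(G)-1}$ minor, and then append one further branch set built from the remainder, arranging that the branch sets produced for the smaller graph are each dominated by a vertex of the remainder (this is precisely what the domination statement buys us). The induced $C_5$, or the odd antihole, supplies the initial bundle of pairwise-minor-adjacent branch sets that a clique minor needs and controls the adjacency pattern near the peeled-off edge.

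I expect the main obstacle to be exactly the imperfect case, and within it the odd antiholes $\overline{C_{2k+1}}$ with $k\ge 3$: unlike longer odd holes these are themselves $2K_2$-free and contain no induced $C_5$, so they cannot be reduced to the $C_5$ case and must be absorbed into the minor construction directly, with the inductive bookkeeping tuned so that the number of branch sets matches $\chi(G)$ on the nose. By contrast, the perfect case is immediate and the appeal to the Strong Perfect Graph Theorem, together with the restriction of odd holes to $C_5$, is routine.
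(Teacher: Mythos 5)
Your perfect case and the appeal to the Strong Perfect Graph Theorem are fine, but they are the easy part; the substantive step --- actually producing the $K_{\chi(G)}$ minor --- is only sketched, and as sketched it does not work. The accounting in your peel-and-append scheme is off. If you delete only the ``remainder'' $S:=V(G)\setminus(N[u]\cup N[v])$, then $\chi$ drops by at most one and you need the appended branch set to be adjacent to all $\chi(G)-1$ branch sets of the smaller graph; but $S$ consists precisely of the vertices with \emph{no} neighbor in $\{u,v\}$, it is independent (hence disconnected unless $|S|\le 1$), it may be empty (so the induction does not even shrink the graph), and nothing forces any vertex of $S$ to dominate the branch sets you obtain by induction --- the ``domination statement'' you quote gives independence of $S$, not any adjacency from $S$ to the rest. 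If instead you delete $\{u,v\}\cup S$, which is $2$-colorable, you lose two colors but gain only \emph{one} branch set that dominates everything left, namely the edge $uv$; that yields only a $K_{\chi(G)-1}$ minor, one short. You never explain how the induced $C_5$ or odd antihole supplies the missing branch sets, and this is exactly where the proof lives.

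The paper's (Micu's) argument closes this gap with a different and simpler obstruction: take a minimal counterexample; non-perfection yields an induced path $v_1v_2v_3v_4$, i.e.\ \emph{two disjoint edges} $v_1v_2$ and $v_3v_4$. Set $A$ to be the vertices anticomplete to $\{v_1,v_2\}$ and $B$ the remaining vertices anticomplete to $\{v_3,v_4\}$. Since $G$ is $2K_2$-free, $A\cup\{v_2,v_4\}$ and $B\cup\{v_1,v_3\}$ are independent, so the deleted set $C=\{v_1,v_2,v_3,v_4\}\cup A\cup B$ is $2$-colorable, while every vertex outside $C$ has a neighbor in both $\{v_1,v_2\}$ and $\{v_3,v_4\}$. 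Thus $\chi(G\setminus C)\ge\chi(G)-2$, induction gives a $K_{\chi(G)-2}$ minor there, and the two edges serve as two further branch sets --- the loss of two colors is matched by two new branch sets, which is the balance your scheme lacks. Note also that this makes your case distinction moot: no analysis of $C_5$ versus odd antiholes is needed, since every non-perfect graph contains an induced $P_4$ (odd holes and odd antiholes of length at least five all do), and that is the only structure the proof uses. If you want to salvage your approach, you need either a second dominating edge disjoint from $uv$ inside the deleted set (which is what the $P_4$ provides) or a genuinely different way to recover the second lost color.
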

\begin{proof} Let $G$ be a $2K_2$-free graph. Suppose $G$ contains  no $K_{\chi(G)}$ minor. We choose $G$ with $|V(G)|$ minimum. Then  $G$ is not perfect. By the Strong Perfect Graph Theorem~\cite{SPGT},  $G$ contains an induced path of length three,  say with vertices $v_1, v_2, v_3, v_4$ in order. Let 
\[A:=\{v\in V(G)\less \{v_1, v_2, v_3, v_4\}\mid v \text{ is anticomplete to } \{v_1, v_2\}\} \text{ and }\]
\[B:=\{v\in V(G)\less (\{v_1, v_2, v_3, v_4\}\cup A)\mid v \text{ is anticomplete to } \{v_3, v_4\}\}.\]
Since $G$ is $2K_2$-free, we see that both $A$ and $B$ are independent sets in $G$, $v_4$ is anticomplete to $A$ and $v_1$ is anticomplete to $B$. Let $C:= \{v_1, v_2, v_3, v_4\}\cup A\cup B$. By the choice of $A$ and $B$,  each vertex in $ V(G)\less C$ is adjacent to at least one vertex in both  $\{v_1, v_2\}$  and  $\{v_3, v_4\}$. Moreover,  $A\cup\{v_2, v_4\}$ and $B\cup\{v_1, v_3\}$ are independent  sets in $G$ and so $ G[C]$  is $2$-colorable.  Thus $\chi(G\less C)\ge \chi(G)-2$. By the minimality of $G$, $G\less C$ contains a $K_{\chi(G)-2}$ minor. This, together with $G[\{v_1, v_2\}]$ and $G[\{v_3, v_4\}]$, yields a $K_{\chi(G)}$ minor in $G$, a contradiction.
\end{proof}

\section{Proof of Theorem~\ref{t:main}}\label{s:main}

A \dfn{banner} is the graph obtained from the cycle  $C_4$ by adding a new vertex    adjacent to exactly one vertex on $C_4$. We use $B=(b_1, b_2, b_3, b;b')$ to denote a banner, where $V(B)=\{b_1, b_2, b_3, b, b'\}$ and $E(B)=\{b_1b_2, b_2b_3, b_3b, bb_1, bb'\}$.   For instance, $T[\{b_1, b_2, b_3, b, b'\}]=B$ is a banner, where $T$ is depicted in Figure~\ref{fig:banner}.  \medskip

\begin{figure}[h]
    \centering
   \begin{tikzpicture}[scale=2]
		\foreach \i in {1,...,3}
		\node[circle, fill = black, inner sep = 1.6pt, label=above:$b_{\i}$] (v\i) at ({90-72*(\i-2)}:1)  {};
		\foreach \i in {4,5}
		\node[circle, fill = black, inner sep = 1.6pt, label=below:$b_{\i}$] (v\i) at ({90-72*(\i-2)}:1)  {};
		
		\node[circle, fill = black, inner sep = 1.6pt, label=below:$b$] (b) at (0,0) {};
		\node[circle, fill = black, inner sep = 1.6pt, label=above:$b'$] (b') at (0,.5) {};
		\draw (v1)--(v2)--(v3)--(v4)--(v5)--(v1);
		
		\foreach \i in {1,3,4,5}
		\draw (b) -- (v\i);
		\draw (b) -- (b');
	\end{tikzpicture}
    \caption{The graph  $T$  in which  $T[\{b_1, b_2, b_3, b, b'\}]=B$ is a banner.}
    \label{fig:banner}
\end{figure}
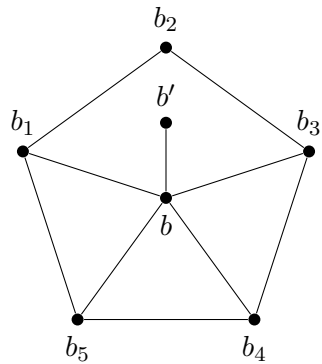

In this section we prove Theorem~\ref{t:main} which we restate below for convenience. \dominating*

\begin{proof}
 Suppose the statement is false.    Let $G$ be a counterexample   such that $|V(G)|$ is minimum. Then $G$ is connected and $h_d(G) < \chi(G)$. We next prove several claims. \\

\setcounter{counter}{0}

\noindent {\bf Claim\refstepcounter{counter}\label{banner}  \arabic{counter}.}
 If $G$ contains an induced banner $B=(b_1, b_2, b_3, b; b')$,  then there exist two adjacent   vertices $b_4, b_5\in V(G)$ such that $b_4$ is complete to     $\{b, b_3\} $  and anticomplete to   $\{b', b_1, b_2\} $;  $b_5$ is complete to     $\{b, b_1\} $  and anticomplete to   $\{b', b_2, b_3\} $. That is, $G$ contains an induced $T$ such that $T$ contains $B$ as an induced subgraph, where $T$ is depicted in Figure~\ref{fig:banner}. 
 
 \begin{proof} Suppose $G$ contains an induced   banner $B=(b_1, b_2, b_3, b; b')$. 
Let $D_1 :=\{b_1, b,b'\}]$ and $D_2 := \{  b_2,b_3 \}$.  Let 
\[A_1:=\{v\in V(G) \less V(B)\mid v \text{ is anticomplete to } D_1\} \text{ and}\]
\[ A_2:=\{v\in V(G) \less (V(B)\cup A_1)\mid v \text{ is anticomplete to } D_2\}.\]
 Since $G$ is $2K_2$-free, we see that $b_2$ is anticomplete to $A_1$ and $b'$ is anticomplete to $A_2$. We next show that some  vertex in $A_2$ is complete  to $\{b, b_1\}$. Suppose no vertex in $A_2$ is complete to $\{b, b_1\}$. Then $A_2$ can be partitioned into $A_2'$ and $A_2''$ such that $b$ is anticomplete to $A_2'$ and complete to $A_2''$. Then $b_1$ is anticomplete to $A_2''$.  Thus $A_1\cup A_2'$ is anticomplete to $\{b,b', b_2\}$ and $A_2''$ is anticomplete to $\{b', b_1,b_2,  b_3\}$. Moreover,   $A_1\cup A_2'\cup\{b, b_2\}$ and $A_2''\cup \{b', b_1, b_3\}$ are independent  sets in $G$ because $G$ is $2K_2$-free.  Let $U:=V(B)\cup A_1\cup A_2$. Then  $\chi(G[U])=2$ and $\chi(G\less U)\ge \chi(G)-2$. Note that each  vertex in $G\less U$ is   adjacent to at least one vertex in  both  $D_1$  and  $D_2$.    By the minimality of $G$,  $G\less U$ contains a dominating clique minor of order $\chi(G)-2$, say $(T_3, \ldots, T_{\chi(G)})$.  Then $(G[D_1], G[D_2], T_3, \ldots, T_{\chi(G)})$ yields a  dominating clique minor of order $\chi(G)$ in $G$, a contradiction. Thus there exists a vertex, say $b_5$,  in $  A_2$ such that $b_5$ is  complete to     $\{b, b_1\} $  and anticomplete to   $\{b', b_2, b_3\} $. By symmetry, 
 there exists a vertex, say  $b_4$, in $G\less V(B)$ such that  $b_4$ is complete to     $\{b, b_3\} $  and anticomplete to   $\{b', b_1, b_2\} $. Then $b_4\ne b_5$ and so $b_4b_5\in E(G)$, else $G[\{b_1, b_5, b_3, b_4\}]=2K_2$. It follows that $G[V(B)\cup\{b_4, b_5\}]=T$.     
 \end{proof}
\noindent {\bf Claim\refstepcounter{counter}\label{C5}  \arabic{counter}.}
 $G$ contains  $C_5$, the cycle of length five,  as an induced subgraph.
 
 \begin{proof}   Suppose   $G$ is $C_5$-free.   Since $G$ is $2K_2$-free, we see that $G$ is $C_\ell$-free for each $\ell\ge6$. By \cref{t:forbidhole} and the fact that $h_d(G) < \chi(G)$,    $G$  contains an induced  $C_4$, say with vertices $v_1, v_2, v_3, v_4$ in order.  
 Define 
  \[I:=\{v\in V(G)\mid v   \text{ is anticomplete to } V(C_4)\}.\]
Then  $I$ is an independent set since   $G$ is  $2K_2$-free. Let $H:=G\less (I\cup V(C_4))$. By the choice of $I$, each vertex in $H$ is adjacent to at least one vertex on   $C_4$. Note that  $\chi(H)\ge \chi(G)-2$. By the minimality of $G$,   $H$ contains a dominating clique minor of order $\chi(G)-2$,  say $(T_3, \ldots, T_{\chi(G)})$.     By Claim~\ref{banner},   $G$ is banner-free because the graph $T$ in Figure~\ref{fig:banner} contains  $C_5$ as an induced subgraph. Thus each vertex in $H$ is adjacent to at least two vertices on $C_4$.   Let $D_1:=\{v_1, v_2\}$ and $D_2:=\{v_3, v_4\}$. If each vertex in $H$ is adjacent  to at least one vertex in both $D_1$ and  $D_2$, then   $(G[D_1], G[D_2], T_3, \ldots, T_{\chi(G)})$ yields a  dominating clique minor of order $\chi(G)$ in $G$, a contradiction.  Thus there exists a vertex $x$ in   $H$ such that $x$ is anticomplete to $D_1$ or $D_2$, say $D_2$. Then $x$ is complete to $D_1=\{v_1, v_2\}$. Let $D_1':=\{v_1, v_4\}$ and $D_2':=\{v_2, v_3\}$. Using a similar argument, there exists a vertex $y$ in  $H$ such that $y$ is anticomplete to $D_1'$ or $D_2'$, say $D_2'$.  Then $y$ is complete to $D_1'=\{v_1, v_4\}$. Thus $x\ne y$ and so $xy\in E(G)$,  else $G[\{x, v_2, y, v_4 \}]=2K_2$. It follows that $G[ \{x, v_2, v_3, v_4, y\}]=C_5$, a contradiction.  
 \end{proof}
    
   By Claim~\ref{C5}, $G$ contains     an induced $C_5$, say with vertices $v_1, v_2, v_3, v_4, v_5$ in order.  Define 
  \[I:=\{v\in V(G)\mid v   \text{ is anticomplete to } V(C_5)\}.\]
Then  $I$ is  an independent set.    Let $H:=G\less (I\cup V(C_5))$.  By the choice of $I$, each vertex in $H$ is adjacent to at least one vertex on   $C_5$.\bigskip

\noindent {\bf Claim\refstepcounter{counter}\label{nbrs}  \arabic{counter}.}
 Each vertex in $H$ is adjacent to at least four vertices on   $C_5$.
 \begin{proof} 
 Suppose there exists a vertex $x\in V(H)$ such that $x$ is adjacent to at most three vertices on $C_5$. We choose $C_5$ and $x$   such that $|N(x)\cap V(C_5)|$ is minimum. We may assume that $xv_1\in E(G)$ because $x\notin I$.   Suppose $x$ is anticomplete to $\{v_3, v_4\}$. Then $G[\{x, v_1, v_3, v_4\}]=2K_2$, a contradiction. Thus  $xv_3\in E(G)$ or  $xv_4\in E(G)$. We may assume that $xv_3\in E(G)$. Then $xv_2\notin E(G)$, else $G[\{x, v_2, v_4, v_5\}]=2K_2$ because $x$ is adjacent to at most three vertices on $C_5$.   We may further assume that $xv_4\notin E(G)$ because $x$ is adjacent to at most three vertices on $C_5$.     
 Then  $(x, v_1, v_2, v_3; v_4)$ is an induced  banner in $G$. By Claim~\ref{banner}, there exist $y,  z\in V(H)$ such that $yz\in E(G)$,  $y $ is complete to $\{x, v_3\}$ and anticomplete to $\{v_1, v_2, v_4\}$, and $z$ is complete to $\{v_2,   v_3\}$ and anticomplete to $\{x, v_1, v_4\}$. Note that $N(y)\cap V(C_5)\subseteq\{v_3, v_5\}$. By the choice of $x$,   we have $xv_5\notin E(G)$. Then $(x, v_3, v_2, v_1; v_5)$ is an induced  banner in $G$. By Claim~\ref{banner}, there exist $u,  w\in V(H)$ such that $uw\in E(G)$,  $u $ is complete to $\{v_1, v_2\}$ and anticomplete to $\{x, v_3, v_5\}$, and $w$ is complete to $\{v_1,   x\}$ and anticomplete to $\{ v_2, v_3, v_5\}$. 
 Since $G[\{ y,z, v_3, v_4, u,w, v_1,v_5\}]$  is  $2K_2$-free, we see that $G[\{y,z, u, w\}]=K_4$,  $v_4$ is complete to $\{u, w\}$  and $v_5$ is complete to $\{y, z\}$.  Let $D_1 := \{x, v_1, v_5\}$, $D_2 := \{v_2, v_3, v_4\}$, $D_3 := \{z, u\}$, and $D_4 := \{y, w\}$. It can be easily checked that $(G[D_1], G[D_2], G[D_3], G[D_4])$ yields    a dominating $K_4$ minor.  Let $S:=\{x, y,z,u,w\}$ and 
 \[A_1:=\{v \in V(H) \less S\mid v \text{  is anticomplete to } D_1\},\]
\[A_i := \{v \in V(H) \backslash (S \cup (\cup_{k=1}^{i-1} A_k ))\mid v \text{ is anticomplete to } D_i\} \text{ for each }  i\in\{2,3,4\}.\]
Let $H^*:=G\less (I\cup V(C_5)\cup S\cup (\bigcup_{k=1}^{4} A_k)$. Then $A_1, \ldots, A_4$ are pairwise disjoint and each vertex in $H^*$ is adjacent to at least one vertex in   $D_j$ for each $j\in[4]$.   We next show that $\chi(H^*)\ge \chi(G)-4$.    Note that   $A_1\cup \{z, v_4\}$ is anticomplete to $\{x, v_1\}$, $A_2\cup\{w, v_5\}$ is anticomplete to $\{v_2, v_3\}$, $A_3\cup\{x\}$ is anticomplete to $\{z, u\}$, $A_4\cup\{v_2\}$ is anticomplete to $\{y, w\}$,  and $x$ is anticomplete to $I$ because $\{x\}\cup I$ is anticomplete to $\{v_4, v_5\}$. Thus  $z$ is anticomplete to $I$ because $\{z\}\cup I$ is anticomplete to $\{x, v_1\}$. It follows that   $A_1\cup I \cup \{z, v_4\}$, $A_2\cup\{w, v_5\}$, $A_3\cup\{x\}$ and $A_4\cup\{v_2\}$ are pairwise disjoint independent sets in $G$  because  $G$ is $2K_2$-free. Thus $A_1\cup I\cup \{v_1, v_4, z\}$, $A_2\cup\{v_3, v_5, w\}$, $A_3\cup\{x, u\}$ and $A_4\cup\{v_2,  y\}$ are pairwise disjoint independent sets in $G$, and so   $G[I\cup V(C_5)\cup S \cup (\bigcup_{k=1}^{4} A_k )]$ is $4$-colorable.   This proves that  $\chi(H^*)\ge \chi(G)-4$. By the minimality of $G$, $H^*$ contains a dominating clique minor of order $\chi(G)-4$, say    $(T_5, \ldots, T_{\chi(G)})$.  Then   $(G[D_1], G[D_2], G[D_3], G[D_4], T_5, \ldots, T_{\chi(G)})$ yields a  dominating clique minor of order $\chi(G)$ in $G$, a contradiction.     
 \end{proof}
  
 By Claim~\ref{nbrs}, every vertex in $H$ is adjacent to at least four vertices on   $C_5$. By the arbitrary choice of $C_5$, we see that for any induced cycle $C$ of length five in $G$, every vertex in $V(G)\less V(C)$ is either anticomplete to $V(C)$ or adjacent to at least four vertices on $C$. Let  
 \[J:=\{v\in V(H)\mid v \text{ is complete to } V(C_5)\} \text{ and }\]
  \[Y_i:= \{v\in V(H)\mid vv_i\notin E(G)\} \text{ for each  } i\in[5].\]
Then $\{J, Y_1, \ldots, Y_5\}$ partitions $V(H)$.     Let $Y:= \bigcup_{k=1}^{5} Y_k$. \\

\noindent {\bf Claim\refstepcounter{counter}\label{IY}  \arabic{counter}.}
    $I$ is anticomplete to $Y$.
 \begin{proof}
Suppose there exist $u\in I$ and  $v \in Y$ such that $uv\in E(G)$. We may assume that $v\in Y_1$. Then $( v_5, v_1, v_2, v; u)$ induces a banner in $G$. By Claim~\ref{banner},    there   exists a vertex $w \in V(H)$ such that $w$ is complete  to $\{v, v_2\}$ and anticomplete to $\{u, v_1, v_5\}$. Thus $w$ is  adjacent to at most three vertices on $C_5$, which contradicts   Claim~\ref{nbrs}.   
\end{proof}

\noindent {\bf Claim\refstepcounter{counter}\label{Yclique}  \arabic{counter}.}
    For each $i \in [5]$, $Y_i$ is a clique of order at least two.
 
 \begin{proof}
Suppose $Y_i$ is not a clique  for some $i\in[5]$.  We may assume that $i=1$.  Let  $u,v\in Y_1$   such that $uv\notin E(G)$. Then $(u, v_4, v, v_2;v_1)$ induces a banner in $G$. By Claim~\ref{banner}, there   exists a vertex  $w\in V(H)$ such that $w$ is complete to $\{u,v_2\}$ and anticomplete to $\{v, v_1, v_4\}$. Then $w$ is adjacent to at most three vertices on $C_5$, which contradicts  Claim~\ref{nbrs}.  Thus $Y_i$  is a clique   for all $i \in [5]$. We next show that $|Y_i|\ge2$ for each $i \in [5]$. \medskip
 
   Suppose   $Y=\es$. Then $\{V(C_5), I, J\}$  partitions   $V(G)$ and $\chi(G[J])\ge \chi(G)-3$. By the minimality of $G$,  $G[J]$ contains a dominating clique minor of order $\chi(G)-3$, say    $(T_4,  \ldots, T_{\chi(G)})$.    Then  $(G[\{v_1, v_2, v_3\}], G[\{v_4\}], G[\{v_5\}],   T_4,   \ldots, T_{\chi(G)})$ yields a  dominating clique minor of order $\chi(G)$ in $G$, a contradiction.    Thus $Y\ne\es$. 
 We may assume that $Y_1\ne \es$.  Let $y_1\in Y_1$. By Claim~\ref{IY},  $G[V(C_5)\cup I\cup \{y_1\}]$ is $3$-colorable. Thus $\chi(H\less y_1)\ge \chi(G)-3$. By the minimality of $G$, $H\less y_1$ contains a dominating clique minor of order $\chi(G)-3$, say    $(  T_4,  \ldots, T_{\chi(G)})$. Suppose $Y_1=\{y_1\}$ or $Y_2=\es$. Then $v_1$ is complete to $V(H)\less \{y_1\}$ if $Y_1=\{y_1\}$ and $v_2$ is complete to $V(H)$ if $Y_2=\es$.  
     Let 
     \[D_1 := \{y_1, v_2, v_3\}, D_2 := \{v_4, v_5\},  D_3 := \{v_1\} \text{ if } Y_1=\{y_1\},\] 
      \[D_1 := \{y_1, v_1, v_5\}, D_2 := \{v_3, v_4\},  D_3 := \{v_2\} \text{ if } Y_2=\es.\]
 It is easy to check that $(G[D_1], G[D_2], G[D_3])$ yields a dominating $K_3$ minor, and   each vertex in $H\less y_1$ is adjacent to at least one vertex in  $D_j$ for all $j\in[3]$.  Thus  $(G[D_1], G[D_2], G[D_3], T_4,   \ldots, T_{\chi(G)})$ yields a  dominating clique minor of order $\chi(G)$ in $G$, a contradiction.  This proves that $|Y_1|\ge 2$ and $Y_2\ne \es$. By symmetry,  $Y_i$ is a clique of order at least two for each $i\in[5]$.  
\end{proof}

\noindent {\bf Claim\refstepcounter{counter}\label{JY}  \arabic{counter}.}
   Every vertex in $J$ is adjacent to at least $|Y_i|-1\ge1$ vertices in $Y_i$ for    all $i \in [5]$.
 \begin{proof}
 By Claim~\ref{Yclique}, $|Y_i|-1\ge1$ for    all $i \in [5]$. Suppose there exists a vertex $v\in J$ such that $v$ is not adjacent to two vertices, say $y_i, y_i'$, in $Y_i$ for some $i\in[5]$.     Since  $J$ is complete to $V(C_5)$, we see that  $G[\{v, v_i, y_i, y_i'\}]=2K_2$, a contradiction.   \end{proof}

For the remainder of the proof, all arithmetic on indices  is done modulo $5$.\\

\noindent {\bf Claim\refstepcounter{counter}\label{YiYj}  \arabic{counter}.}
    For each $i \in [5]$, $Y_i$ is complete to $Y_{i+2}\cup Y_{i-2}$. Furthermore, every vertex in $Y_i$ is adjacent to at least $|Y_{i+1}|-1$ vertices in $Y_{i+1}$ and   at least $|Y_{i-1}|-1$ vertices in $Y_{i-1}$.
 
 \begin{proof}
  Let $y_i \in Y_i$. Suppose there exists a vertex $y_{i+2} \in Y_{i+2}$ such that $y_iy_{i+2} \notin E(G)$. Then $G[\{y_i, v_{i+2}, y_{i+2}, v_i\}]=2K_2$, a contradiction. Thus $Y_i$ is complete to $Y_{i+2}$. By symmetry,  $Y_i$ is complete to $Y_{i-2}$. Next, suppose there exist  $y_{i+1}, y_{i+1}'\in Y_{i+1}$ such that $y_iy_{i+1}, y_iy_{i+1}'\notin E(G)$.  Recall that $y_iv_{i+1}\in E(G)$.   Then $G[\{y_i, v_{i+1}, y_{i+1}, y_{i+1}'\}]=2K_2$, a contradiction.  Thus each vertex in $Y_i$ is adjacent to at least $|Y_{i+1}|-1$ vertices in $Y_{i+1}$.   By symmetry, each vertex in $Y_i$ is adjacent to at least $|Y_{i-1}|-1$ vertices in $Y_{i-1}$.
\end{proof}

For each $i,j\in[5]$ with $i\ne j$, we use $ G[Y_i, Y_j]$ to denoted the bipartite graph with bipartition $\{Y_i, Y_j\}$ such that  $ E(G[Y_i, Y_j])$ consists of  all edges in $G$ with one end in $Y_i$ and the other in $Y_j$.\\

\noindent {\bf Claim\refstepcounter{counter}\label{pm}  \arabic{counter}.}
   For each $i \in [5]$,    $|Y_i|=|Y_{i+1}|$ and $ G[Y_i, Y_{i+1}]$  is  $(|Y_i|-1)$-regular.
 
\begin{proof}
  Let $y_i \in Y_i$. By Claim~\ref{YiYj},  $y_i$  is adjacent to at least $|Y_{i+1}|-1$ vertices in $Y_{i+1}$. Suppose $y_i$ is complete to $Y_{i+1}$. Let $y_{i+1} \in Y_{i+1}$ and $ y_{i-2} \in Y_{i-2}$.  Then $y_{i+1}y_{i-2}\in E(G)$ by Claim~\ref{YiYj}. Let  $S := \{v_i, v_{i+1}, v_{i-2}, y_i, y_{i+1}, y_{i-2}\}$.   Note that $I$ is anticomplete to $S$ by Claim~\ref{IY} and the choice of $I$. Then $G[S\cup I]$ is $3$-colorable  and so $\chi(G\less(S\cup I))\ge\chi(G)-3$. By the minimality of $G$, $G\less(S\cup I)$ contains a dominating clique minor of order $\chi(G)-3$, say    $(T_4,  \ldots, T_{\chi(G)})$. 
 Let $D_1 := \{y_i, v_{i+1}\}$, $D_2 := \{y_{i+1}, v_{i-2}\}$, and $D_3 := \{y_{i-2}, v_i\}$.  Since $y_i$ is complete to $Y_{i+1}$, we see that $(G[D_1], G[D_2], G[D_3])$ yields a dominating $K_3$ minor and each vertex in $Y_{i+1}\less y_{i+1}$ is adjacent to $y_i$ in $D_1$. By Claim~\ref{YiYj} and Claim~\ref{Yclique}, it can be easily checked that each vertex in $G\less(S\cup I)$ is adjacent to at least one vertex in $D_\ell$ for each $\ell\in[3]$. 
 Thus    $(G[D_1], G[D_2], G[D_3], T_4,   \ldots, T_{\chi(G)})$ yields a  dominating clique minor of order $\chi(G)$ in $G$, a contradiction.  This proves that no vertex in $Y_i$ is complete to $Y_{i+1}$. By symmetry,  no vertex in $  Y_{i+1}$ is   complete to $Y_i$. By Claim~\ref{YiYj},   $|Y_i|=|Y_{i+1}|$ and $ G[Y_i, Y_{i+1}]$ is $(|Y_i|-1)$-regular for each $i\in[5]$.   \end{proof}

By Claim~\ref{pm}, $ |Y_1|=\cdots=|Y_5|$. Let   $m:=|Y_1|$. Then  $m\ge2$ by Claim~\ref{Yclique}. For each $i\in[5]$, by Claim~\ref{pm},    $ G[Y_i, Y_{i-1}]$ and $ G[Y_i, Y_{i+1}]$ are both $(m-1)$-regular, that is,    every vertex   in $ Y_i$ has a unique non-neighbor in  $ Y_{i-1}$ and a unique non-neighbor in  $ Y_{i+1}$.  
 \\

\noindent {\bf Claim\refstepcounter{counter}\label{YJ}  \arabic{counter}.}
  For each $i\in[5]$, let $y_i\in Y_i$,  $y_{i-1}\in Y_{i-1}$ and $y_{i+1} \in Y_{i+1}$  such that $y_iy_{i-1}, y_iy_{i+1}\notin E(G)$.   Then no vertex in $J$ is anticomplete to $\{y_{i-1}, y_{i+1}\}$. 
 
\begin{proof}
Suppose  there exists a vertex $v\in J$ such that $v$ is anticomplete to $\{y_{i-1}, y_{i+1}\}$.  By Claim~\ref{YiYj}, $y_{i-1}y_{i+1}\in E(G)$. Let $C:=\{y_{i-1}, y_{i+1}, v_{i-1}, y_i, v_{i+1}\}$. Then $ G[C]=C_5$ and  $v$ is adjacent to  at most three vertices on $ G[C]$, contrary to Claim~\ref{nbrs}.    
 \end{proof}

Let $R := \{v_1, v_2, v_3, v_4\} \cup Y_1 \cup Y_2 \cup Y_3 \cup Y_4$. Then $I$ is anticomplete to $R$ by Claim~\ref{IY}.   We first show that $G[R\cup I]$ is $(2m+2)$-colorable. By Claim~\ref{YiYj}, $G[Y_1\cup Y_4\cup\{v_2, v_3\}]=K_{2m+2}$ and so $\chi(G[R\cup I])\ge 2m+2$. By Claim~\ref{YiYj} again,  $G[Y_1\cup Y_2]$ is $m$-colorable and $G[Y_3\cup Y_4]$ is  $m$-colorable. Note that $G[\{v_1, v_2, v_3, v_4\}]$ is $2$-colorable.  It follows that  $\chi(G[R\cup I])=2m+2$ and so $\chi(G\less (R\cup I)) \ge \chi(G) -( 2m + 2)$.  By the minimality of $G$, $G\less (R\cup I)$ contains a dominating clique minor of order $\chi(G) -( 2m + 2)$, say $(T_{2m+3}, \ldots, T_{\chi(G) })$.   For each $i\in [4]$, let $Y_i:=\{y^i_1, \ldots, y^i_m\}$ such that $y_\ell^2$ is anticomplete to $\{y_\ell^1,y_\ell^3\}$ for each $\ell\in[m]$. This is possible because $G[Y_2, Y_1]$ and $G[Y_2, Y_3]$ are $(m-1)$-regular bipartite graphs by Claim~\ref{pm}. By Claim~\ref{YJ}, no vertex in $J$ is anticomplete to $\{y^1_\ell, y^3_\ell\}$ for each $\ell\in [m]$.  \medskip

Let $m:=2p+r$, where $p$ is a positive integer and $r\in\{0,1\}$. Let $D_1 := \{v_2, v_3, v_4\}$ when $m$ is even,  and let $D_1 := \{v_2, v_3, y^4_m\}$ and $D_2 := \{v_4, y^2_m\}$ when $m$ is odd.  For  each $j\in[p]$,   let  $D_{1+r+j}:=\{y^2_{2j-1}, y^2_{2j}\}$ and $D_{p+1+r+j}:=\{y^4_{2j-1}, y^4_{2j}\}$.   For each $\ell\in [m]$, let  $D_{m+1+\ell}:=\{y^1_\ell, y^3_\ell\}$. Finally, let $D_{2m+2} := \{v_1\}$. Then $R=\bigcup_{k=1}^{2m+2}D_k$ and $D_1, \ldots, D_{2m+2}$ are pairwise disjoint. By Claim~\ref{YiYj} and Claim~\ref{JY}, we see that  $(G[D_1], \ldots, G[D_{2m+2}])$ yields a dominating  $K_{2m+2}$ minor. Note that $V(G\less (R\cup I))=\{v_5\} \cup Y_5 \cup J$. By Claim~\ref{YJ}, Claim~\ref{YiYj} and Claim~\ref{JY}, each vertex in  $G\less (R\cup I)$ is adjacent to at least one vertex in $D_i$ for each $i\in [2m+2]$.  Thus $(G[D_1], \ldots, G[D_{2m+2}], T_{2m+3}, \ldots, T_{\chi(G)})$ yields a dominating  clique minor of order $\chi(G)$ in $G$, a contradiction.   \medskip

This completes the proof of Theorem~\ref{t:main}.
\end{proof}
 
 \noindent{\bf Acknowledgements.} The authors Zi-Xia Song  and Thomas Tibbetts  thank Max Malik, an undergraduate student at the University of Central Florida, for helpful discussion.   Zi-Xia Song   thanks the Banff International Research Station and the organizers of  the ``New Perspectives in Colouring and Structure'' workshop, held  in Banff from September 29 - October 4, 2024, for their  kind  invitation and hospitality.


\begin{thebibliography}{CRST06}

\bibitem[AH77]{AH77}
K.~Appel and W.~Haken.
\newblock Every planar map is four colorable. {I}. {D}ischarging.
\newblock {\em Illinois J. Math.}, 21(3):429--490, 1977.

\bibitem[AHK77]{AHK77}
K.~Appel, W.~Haken, and J.~Koch.
\newblock Every planar map is four colorable. {II}. {R}educibility.
\newblock {\em Illinois J. Math.}, 21(3):491--567, 1977.

\bibitem[CRST06]{SPGT}
Maria Chudnovsky, Neil Robertson, Paul Seymour, and Robin Thomas.
\newblock The strong perfect graph theorem.
\newblock {\em Ann. of Math. (2)}, 164(1):51--229, 2006.

\bibitem[Dir52]{Dirac52}
G.~A. Dirac.
\newblock A property of {$4$}-chromatic graphs and some remarks on critical
  graphs.
\newblock {\em J. London Math. Soc.}, 27:85--92, 1952.

\bibitem[DP25]{DP25}
Michelle Delcourt and Luke Postle.
\newblock Reducing linear {H}adwiger's conjecture to coloring small graphs.
\newblock {\em J. Amer. Math. Soc.}, 38(2):481--507, 2025.

\bibitem[Had43]{Had43}
H.~Hadwiger.
\newblock \"{U}ber eine {K}lassifikation der {S}treckenkomplexe.
\newblock {\em Vierteljschr. Naturforsch. Ges. Z\"{u}rich}, 88:133--142, 1943.

\bibitem[IW]{IW24}
Freddie Illingworth and David~R. Wood.
\newblock Dominating {$K_t$}-model.
\newblock arXiv:2405.14299.

\bibitem[Kos82]{Kostochka82}
Alexandr~V. Kostochka.
\newblock The minimum {H}adwiger number for graphs with a given mean degree of
  vertices.
\newblock {\em Metody Diskret. Analiz.}, (38):37--58, 1982.

\bibitem[Kos84]{Kostochka84}
Alexandr~V. Kostochka.
\newblock Lower bound of the {H}adwiger number of graphs by their average
  degree.
\newblock {\em Combinatorica}, 4(4):307--316, 1984.

\bibitem[Mic05]{Micu05}
Eliade~Mihai Micu.
\newblock Graph minors and hadwiger's conjecture.
\newblock {\em Doctoral dissertation, The Ohio State University}, 2005.

\bibitem[Nor]{Norin24}
Sergey Norin.
\newblock {\em New Perspectives in Colouring and Structure, September 29 --
  October 4, 2024, Banff International Research Station, Banff, Alberta,
  Canada}.

\bibitem[Nor23]{Norinsurvey}
Sergey Norin.
\newblock Recent progress towards {H}adwiger's conjecture.
\newblock In {\em I{CM}---{I}nternational {C}ongress of {M}athematicians.
  {V}ol. 6. {S}ections 12--14}, pages 4606--4620. EMS Press, Berlin, [2023]
  \copyright 2023.

\bibitem[NPS23]{NPS23}
Sergey Norin, Luke Postle, and Zi-Xia Song.
\newblock Breaking the degeneracy barrier for coloring graphs with no {$K_t$}
  minor.
\newblock {\em Adv. Math.}, 422:Paper No. 109020, 23, 2023.

\bibitem[RST93]{RST93}
Neil Robertson, Paul Seymour, and Robin Thomas.
\newblock Hadwiger's conjecture for {$K_6$}-free graphs.
\newblock {\em Combinatorica}, 13(3):279--361, 1993.

\bibitem[Sey16]{Seymoursurvey}
P.~Seymour.
\newblock {\em Hadwiger's {C}onjecture}, chapter~13, pages 417--437.
\newblock Springer, Cham, 2016.
\newblock In: Open Problems in Mathematics (edited by J. Nash Jr. and M.
  Rassias).

\bibitem[SS]{SS25}
Michael Scully and Zi-Xia Song.
\newblock Dominating {H}adwiger's {C}onjecture for graphs {$G$} with
  {$\alpha(G)=2$}.
\newblock arXiv:.

\bibitem[ST17]{ST17}
Zi-Xia Song and Brian Thomas.
\newblock Hadwiger's conjecture for graphs with forbidden holes.
\newblock {\em SIAM J. Discrete Math.}, 31(3):1572--1580, 2017.

\bibitem[Tho84]{Thomason84}
Andrew Thomason.
\newblock An extremal function for contractions of graphs.
\newblock {\em Math. Proc. Cambridge Philos. Soc.}, 95(2):261--265, 1984.

\bibitem[Tof96]{Toft}
B.~Toft.
\newblock A survey of hadwiger's conjecture.
\newblock {\em Surveys in Graph Theory}, Congr. Numer. 115:249--283, 1996.
\newblock edited by G. Chartrand and M. Jacobson.

\bibitem[Wag37]{Wagner37}
K.~Wagner.
\newblock \"{U}ber eine {E}igenschaft der ebenen {K}omplexe.
\newblock {\em Mathematische Annalen}, 114:570--590, 1937.

\end{thebibliography}
\end{document}